\def\BibTeX{{\rm B\kern-.05em{\sc i\kern-.025em b}\kern-.08em
    T\kern-.1667em\lower.7ex\hbox{E}\kern-.125emX}}
\newtheorem{theorem}{Theorem}
\newtheorem{remark}{Remark}
\newcommand{\differential}{{\rm{d}}}
\renewcommand{\det}{{\mathrm{det}}}
\newcommand{\dotminus}{\mathbin{\text{\@dotminus}}}
\newcommand{\@dotminus}{%
  \ooalign{\hidewidth\raise1ex\hbox{.}\hidewidth\cr$\m@th-$\cr}%
}
\title{\LARGE\textbf{
On the Contraction Coefficient of the Schr\"{o}dinger Bridge for Stochastic Linear Systems}
}
\author{Alexis M.H. Teter, Yongxin Chen, Abhishek Halder
\thanks{Alexis M.H. Teter is with the Department of Applied Mathematics, University of California, Santa Cruz, CA 95064, USA, {\tt\small{amteter@ucsc.edu}}.}%
\thanks{Yongxin Chen is with the Department of Aerospace Engineering, Georgia Institute of Technology, Atlanta, GA 30332, USA, {\tt\small{yongchen@gatech.edu}}.}%
\thanks{Abhishek Halder is with the Department of Aerospace Engineering, Iowa State University, Ames, IA 50011, USA, {\tt\small{ahalder@iastate.edu}}.}%
\thanks{This research was supported by NSF award 2112755.}
}
\begin{document}
\bstctlcite{IEEE_b:BSTcontrol}

\maketitle
\thispagestyle{empty}
\pagestyle{empty}

\begin{abstract}
Schr\"{o}dinger bridge is a stochastic optimal control problem to steer a given initial state density to another, subject to controlled diffusion and deadline constraints. A popular method to numerically solve the Schr\"{o}dinger bridge problems, in both classical and in the linear system settings, is via contractive fixed point recursions. These recursions can be seen as dynamic versions of the well-known Sinkhorn iterations, and under mild assumptions, they solve the so-called Schr\"{o}dinger systems with guaranteed linear convergence. In this work, we study \emph{a priori} estimates for the contraction coefficients associated with the convergence of respective Schr\"{o}dinger systems. We provide new geometric and control-theoretic interpretations for the same. Building on these newfound interpretations, we point out the possibility of improved computation for the worst-case contraction coefficients of linear SBPs by preconditioning the endpoint support sets.
\end{abstract}


\section{Introduction}\label{sec:introduction}
The Schr\"{o}dinger bridge problem (SBP) for a stochastic linear system concerns with minimum effort steering of 
a controlled stochastic process $\bm{x}^{\bm{u}}(t)$ satisfying It\^{o} diffusion
\begin{align}
\differential\bm{x}^{\bm{u}}(t) = \left(\bm{A}(t)\bm{x}^{\bm{u}} + \bm{B}(t)\bm{u}\right)\differential t + \sqrt{2\varepsilon}\bm{B}(t)\differential\bm{w}(t),
\label{ControlledLinearSystem}
\end{align}
over a fixed time horizon $[0,1]$, from a given initial state PDF $\rho_{0}(\cdot):=\rho^{\bm{u}}(t=0,\cdot)$ to another given terminal state PDF $\rho_{1}(\cdot):=\rho^{\bm{u}}(t=1,\cdot)$. 

In \eqref{ControlledLinearSystem}, $\bm{x}^{u}\in\mathbb{R}^{n}$ is the controlled state, $\bm{u}\in\mathbb{R}^{m}$ is the (to-be-designed) control, and $\bm{w}\in\mathbb{R}^{m}$ is the standard Wiener process. The constant $\varepsilon>0$ denotes the strength of the process noise, and is not necessarily small. 

Notice that the process noise in \eqref{ControlledLinearSystem} enters through the same ``channels" as the input, which is the case in many practical applications, e.g., in noisy actuators, in external disturbances such as wind gust affecting the system state via forcing, and in unmodeled dynamics.

We assume that\\
\textbf{A1.} the matricial trajectory pair $(\bm{A}(t),\bm{B}(t))$ is continuous and bounded for all $t\in[0,1]$,\\
\noindent \textbf{A2.} $(\bm{A}(t),\bm{B}(t))$ is a controllable pair in the sense that the finite horizon \emph{controllability Gramian}
\begin{align}
\bm{M}_{10} := \displaystyle\int_{0}^{1}\bm{\Phi}_{1\tau}\bm{B}(\tau)\bm{B}^{\top}(\tau)\bm{\Phi}^{\top}_{1\tau}\differential\tau
\label{ControllabGramian}
\end{align}
is symmetric positive definite, where $\bm{\Phi}_{t\tau}:=\bm{\Phi}(t,\tau)$ for $0\leq \tau \leq t \leq 1$ denotes the state transition matrix associated with the state matrix $\bm{A}(t)$,\\
\noindent \textbf{A3.} the given endpoint PDFs $\rho_0,\rho_1$ have compact supports $\mathcal{X}_0,\mathcal{X}_1\subset\mathbb{R}^{n}$, respectively, satisfying $\int_{\mathcal{X}_0}\rho_0 = \int_{\mathcal{X}_1}\rho_1 = 1$.

The minimum effort objective translates to the following stochastic optimal control problem:
\begin{subequations}
\begin{align}
&\underset{\bm{u}}{\arg\inf}\quad\mathbb{E}_{\bm{x}^{\bm{u}}}\int_{0}^{1}\|\bm{u}\|_2^2\:\differential t\label{LinSBPObj}\\
&\text{subject to}\quad\eqref{ControlledLinearSystem}, \; \bm{x}^{\bm{u}}(t=0)\sim\rho_0, \; \bm{x}^{\bm{u}}(t=1)\sim\rho_1,\label{LinSBPConstr}
\end{align}
\label{LinearSBP}
\end{subequations}
where the expectation in \eqref{LinSBPObj} is w.r.t. the controlled stochastic state $\bm{x}^{\bm{u}}\sim\rho^{
\bm{u}}(t,\cdot)$. The problem \eqref{LinearSBP} is to be solved over the feasible set $\mathcal{U}$ that comprises of finite energy Markovian control policies, i.e., 
$$\mathcal{U}:=\{\bm{u}:[0,1]\times\mathbb{R}^{n}\mapsto\mathbb{R}^{m}\mid \mathbb{E}\int_{0}^{1}\|\bm{u}\|_2^2\differential t < \infty\}.$$
With slight abuse of nomenclature, we will refer to problem \eqref{LinearSBP} as the ``linear SBP".

The classical SBP \cite{schrodinger1931umkehrung,schrodinger1932theorie,wakolbinger1990schrodinger,leonard2012schrodinger} is the following special case of problem \eqref{LinearSBP}: $\bm{A}(t)\equiv\bm{0}, \bm{B}(t)\equiv \bm{I}_{n}$. For recent works elaborating connections between the SBP and stochastic optimal control, see \cite{chen2016relation,chen2021stochastic}.

\begin{figure}[t]
\centering
\includegraphics[width=0.95\linewidth]{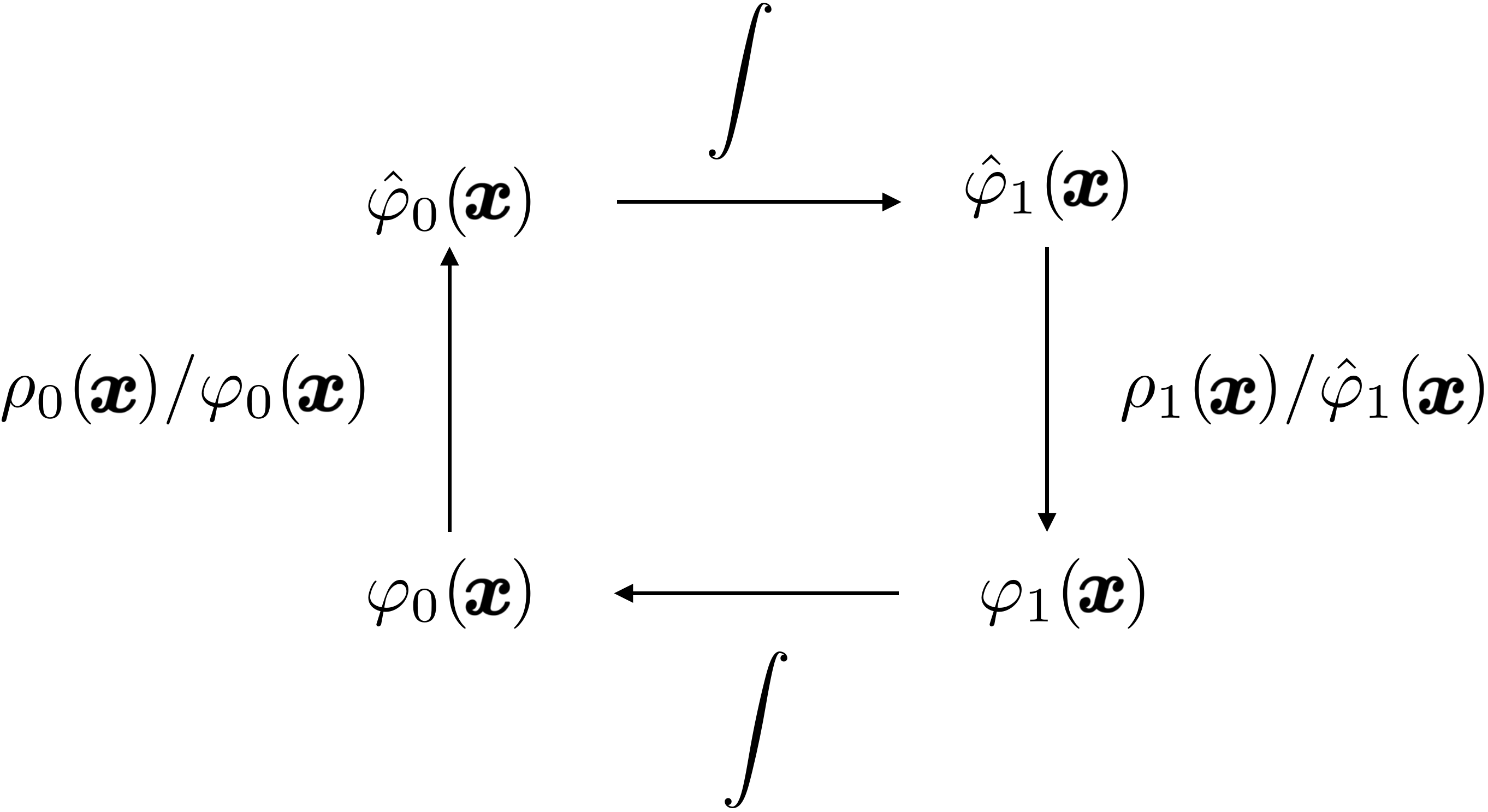}
\caption{An SBP over time horizon $[0,1]$ is solved via contractive fixed point recursion shown over the pair $\left(\hat{\varphi}_{0}(\cdot),\varphi_{1}(\cdot)\right)$. The recursion solves the associated Schr\"{o}dinger system \eqref{SchrodingerSystem}. The top (resp. bottom) horizontal arrow computes the integral in \eqref{forward} (resp. \eqref{backward}). The vertical arrows apply given boundary conditions $\rho_0,\rho_1$.}
\vspace*{-0.1in}
\label{CircularDiagm}
\end{figure}

\subsubsection{Schr\"{o}dinger system, Schr\"{o}dinger factors, and the solution of SBP}
As is well-known \cite[Sec. 8]{chen2021stochastic}, \cite[Sec. II]{caluya2021wasserstein}, SBPs such as \eqref{LinearSBP} can be solved by computing the function pair $\left(\hat{\varphi}_{0}(\cdot),\varphi_{1}(\cdot)\right)$ satisfying a system of nonlinear integral
equations, referred to as the \emph{Schr\"{o}dinger system}:
\begin{subequations}
\begin{align}
& \rho_0(\boldsymbol{x})=\hat{\varphi}_0(\boldsymbol{x}) \int_{\mathbb{R}^n} q(0, \boldsymbol{x}, 1, \boldsymbol{y}) \varphi_1(\boldsymbol{y}) \mathrm{d} \boldsymbol{y}, \label{backward}\\
& \rho_1(\boldsymbol{x})=\varphi_1(\boldsymbol{x}) \int_{\mathbb{R}^n} q(0, \boldsymbol{y}, 1, \boldsymbol{x}) \hat{\varphi}_0(\boldsymbol{y}) \mathrm{d} \boldsymbol{y},\label{forward}
\end{align}
\label{SchrodingerSystem}
\end{subequations}
where $q$ is the \emph{uncontrolled} Markov kernel associated with \eqref{ControlledLinearSystem}, i.e., the Markov kernel with $\bm{u}\equiv\bm{0}$. The system \eqref{SchrodingerSystem} can in turn be solved via a fixed point recursion over $\left(\hat{\varphi}_{0}(\cdot),\varphi_{1}(\cdot)\right)$ shown in Fig. \ref{CircularDiagm}, which is provably contractive \cite[Sec. III]{chen2016entropic} in Hilbert's projective metric \cite{hilbert1895gerade,bushell1973hilbert}. See also \cite{franklin1989scaling}.

$\left(\hat{\varphi}_{0}(\cdot),\varphi_{1}(\cdot)\right)$, thus computed, are used to find the \emph{Schr\"{o}dinger factors}
\begin{subequations}
\begin{align}
& \hat{\varphi}(t,\boldsymbol{x}):=\int_{\mathbb{R}^n} q(0, \boldsymbol{y}, t, \boldsymbol{x}) \hat{\varphi}_0(\boldsymbol{y}) \mathrm{d} \boldsymbol{y}, \quad t \geq 0,\\
& \varphi(t,\boldsymbol{x}):=\int_{\mathbb{R}^n} q(t, \boldsymbol{x}, 1, \boldsymbol{y}) \varphi_1(\boldsymbol{y}) \mathrm{d} \boldsymbol{y}, \quad t \leq 1,
\end{align}
\label{SchrodingerFactors}
\end{subequations}
which then yields the optimally controlled state PDF $\rho^{\bm{u}}_{\rm{opt}}(t,\bm{x}) = \hat{\varphi}(t,\boldsymbol{x})\varphi(t,\boldsymbol{x})$, and the optimal control $\bm{u}_{\rm{opt}}(t,\bm{x})=2\varepsilon\bm{B}(t)^{\top}\nabla_{\bm{x}}\log\varphi(t,\boldsymbol{x})$ for all $t\in[0,1]$. In particular, \eqref{SchrodingerFactors} shows that the factor $\hat{\varphi}(t,\cdot)$ (resp. $\varphi(t,\cdot)$) is $q$\emph{-harmonic} solving a backward Kolmogorov PDE (resp. $q$\emph{-coharmonic} solving a forward Kolmogorov PDE).

This result, on one hand, guarantees the existence and uniqueness of solution for the SBP. On the other hand, it offers a practical algorithm in the form of a cone-preserving fixed point recursion shown in Fig. \ref{CircularDiagm}. Thanks to the Banach contraction mapping theorem, this recursion has guaranteed \emph{linear} convergence with a \emph{contraction coefficient} $\kappa\in(0,1)$. The smaller the $\kappa$, the faster the convergence.

For fixed SBP data $\mathcal{X}_0,\mathcal{X}_{1},\varepsilon,\bm{A}(t),\bm{B}(t)$, the contraction coefficient $\kappa$ in general, depends on the specific choices of $\rho_0,\rho_1$. In this work, we focus on the \emph{worst-case contraction coefficient} $\gamma$ satisfying $\kappa\leq \gamma$. The ``worst-case" is understood over all possible $\rho_0,\rho_1$ supported on given compact sets $\mathcal{X}_0,\mathcal{X}_{1}\subset\mathbb{R}^{n}$. In other words, $\gamma$ is the tightest upper bound on $\kappa$ that only depends on $\mathcal{X}_0,\mathcal{X}_{1},\varepsilon,\bm{A}(t),\bm{B}(t)$, i.e., on the geometry of the endpoint supports, and the drift and diffusion parameters in \eqref{ControlledLinearSystem}. 

\subsubsection{Support Function}\label{subsubsec:supportfunction}
Let $\langle\cdot,\cdot\rangle$ denote the standard Euclidean inner product. We will need the notion of support function $h_{\mathcal{K}}(\cdot)$ of closed convex set $\mathcal{K}$, defined as 
\begin{align}
h_{\mathcal{K}}(\bm{y}) := \underset{\bm{x}\in\mathcal{K}}{\sup}\langle\bm{y},\bm{x}\rangle, \quad \bm{y}\in\mathbb{R}^{n},
\label{DefSptFn}
\end{align}
which measures the distance from the origin to a supporting hyperplane of $\mathcal{K}$ with normal along $\bm{y}$. From \eqref{DefSptFn}, the support function is positive homogeneous of degree one, i.e., $h_{\mathcal{K}}(a\bm{y}) = ah_{\mathcal{K}}(\bm{y})$ for $a>0$. Since only the direction of $\bm{y}$ matters, it is customary \cite[p. 209]{hiriart1996convex} to consider $\bm{y}$ as a unit vector, i.e., to restrict the domain of $h_{\mathcal{K}}(\cdot)$ to $\mathbb{S}^{n-1}$.

From \eqref{DefSptFn}, the support function is finite for a bounded set. It uniquely characterizes a convex set $\mathcal{K}$ since $h_{\mathcal{K}}(\cdot)$ equals the Legendre-Fenchel conjugate of the indicator function of $\mathcal{K}$. Definition \eqref{DefSptFn} can be extended to nonconvex $\mathcal{K}$ in the sense $h_{\mathcal{K}}(\cdot)$ is invariant under closure of convexification of $\mathcal{K}$.

The function $h_{\mathcal{K}}(\cdot)$ has nice properties that will find use in Sec. \ref{secComputationConvex}. The function is distributive under Minkowski sum: $h_{\mathcal{K}_1 + \mathcal{K}_2}(\cdot) = h_{\mathcal{K}_1}(\cdot)+h_{\mathcal{K}_2}(\cdot)$. Given convex set $\mathcal{K}$ and $\bm{T}\in\mathbb{R}^{n\times n}$, $\bm{\tau}\in\mathbb{R}^{n}$, the support function of the affine transformed set $\bm{T}\mathcal{K}+\bm{\tau}$ is  
\begin{align}
h_{\bm{T}\mathcal{K}+\bm{\tau}}(\bm{y}) = h_{\mathcal{K}}\left(\bm{T}^{\top}\bm{y}\right) + \langle\bm{\tau},\bm{y}\rangle.
\label{SptFnOfAffineTransformation}
\end{align}

\subsubsection{Contributions}\label{subsubsec:contributions}
In practice, the contraction coefficient $\kappa$ is numerically observed to be small (i.e., fast convergence) even in more general settings \cite[Fig. 4]{caluya2020reflected}, \cite{de2021diffusion,caluya2021wasserstein,haddad2020prediction} than classical or linear SBPs. The recent work \cite{stromme2023sampling} investigates classical SBP convergence from a sample complexity perspective. To the best of the authors' knowledge, this is the first work analyzing and interpreting the worst-case contraction coefficient in terms of the SBP data.

Our specific contributions are threefold:
\begin{itemize}
    \item derivation of a formula for the worst-case contraction coefficient for linear SBP in terms of the problem data $\mathcal{X}_0,\mathcal{X}_{1},\varepsilon,\bm{A}(t),\bm{B}(t)$, 
    
    \item novel control-theoretic as well as geometric interpretations for the aforesaid formula,

    \item highlighting how pre-conditioning the supports $\mathcal{X}_0,\mathcal{X}_{1}$ can help reduce the worst-case contraction coefficient, thereby improving the convergence of the linear SBP.  
\end{itemize}

\subsubsection{Organization}\label{subsubsec:organization} The layout of the material is as follows. In Sec. \ref{SecClassicalSBPRate}, we express the worst-case contraction coefficient for classical SBP in a way that permits generalization. Building on this, Sec. \ref{SecLinSBPRate} presents a formula for the worst-case contraction coefficient for linear SBP (Thm. \ref{ThmgammaLinear}), and provides a control-theoretic interpretation (Sec. \ref{subsec:controltheoreticinterpretation}) for the same. Sec. \ref{secComputationConvex} focuses on the case when the compact supports $\mathcal{X}_{0},\mathcal{X}_{1}$ are convex, and gives geometric interpretations (Thm. \ref{Thm:alphatildebetatildeSptFn}) for the quantities discussed earlier. The usage of preconditioning to improve the computation of the worst-case contraction coefficient is also discussed in Sec. \ref{secComputationConvex} with an illustrative example. Sec. \ref{sec:conclusions} concludes the paper.


\section{Contraction Coefficient for Classical SBP}\label{SecClassicalSBPRate}
To prepare ground for analyzing the contraction coefficient in linear SBP, we recall corresponding ideas for the classical SBP, and express them in a way to help generalization. 

For a given $\varepsilon>0$, consider the (scaled) standard Wiener process $\sqrt{2\varepsilon}\differential\bm{w}(t)$ in $\mathbb{R}^{n}$ with the associated Markov kernel
\begin{align}
q_{\rm{B}}\left(0,\bm{x}_{0},1,\bm{x}_1\right) := \left(4\pi\varepsilon\right)^{-n/2}
\exp\left(-\dfrac{\|\bm{x}_0 - \bm{x}_1\|_2^2}{4\varepsilon}\right),
\label{BrownianKernel}
\end{align}
where the subscript ${\rm{B}}$ stands for the Brownian a.k.a. standard Wiener process. For classical SBP, the $q\equiv q_{\rm{B}}$ in \eqref{SchrodingerSystem}-\eqref{SchrodingerFactors}. 

We note that $q_{\rm{B}}$ is continuous for all $(\bm{x}_0,\bm{x}_1)\in\mathbb{R}^{n}\times\mathbb{R}^{n}$. Furthermore, for $\mathcal{X}_0,\mathcal{X}_1$ compact, there exist constants $\alpha_{\rm{B}},\beta_{\rm{B}}$ such that $0<\alpha_{\rm{B}} \leq \beta_{\rm{B}} < \infty$, and 
\begin{align}
\alpha_{\rm{B}} \leq q_{\rm{B}}\left(0,\bm{x}_{0},1,\bm{x}_1\right) \leq \beta_{\rm{B}} \quad\forall\:(\bm{x}_0,\bm{x}_1)\in \mathcal{X}_0\times\mathcal{X}_1.
\label{alphabetaBoundBrownianKernel}
\end{align}

In \cite[equation (17)]{chen2016entropic}, the rate of convergence for the Schr\"{o}dinger system associated with the classical SBP was related to the quantity
\begin{align}
\gamma_{\rm{B}} := \tanh^{2}\left(\frac{1}{2}\log\left(\dfrac{\beta_{\rm{B}}}{\alpha_{\rm{B}}}\right)\right) \in (0,1).
\label{gammaClassicalSBP}
\end{align}
Specifically, $\gamma_{\rm{B}}$ was shown \cite[Lemma 5]{chen2016entropic} be to the \emph{worst-case contraction coefficient} for a single pass of the recursion shown in Fig. \ref{CircularDiagm}.

For ensuing development, it is helpful to define
\begin{subequations}
\begin{align}
\widetilde{\alpha}_{\rm{B}} &:= \underset{\bm{x}_0\in\mathcal{X}_0, \bm{x}_1\in\mathcal{X}_1}{\max}\:\|\bm{x}_0 - \bm{x}_1\|_2^2, \label{alphatildeClassical}\\ \widetilde{\beta}_{\rm{B}} &:= \underset{\bm{x}_0\in\mathcal{X}_0, \bm{x}_1\in\mathcal{X}_1}{\min}\:\|\bm{x}_0 - \bm{x}_1\|_2^2,\label{betatildeClassical}
\end{align}
\label{defalphatildebetatilde}    
\end{subequations}
wherein the maximum and minimum are guaranteed to exist due to the compactness of $\mathcal{X}_0,\mathcal{X}_1$. From \eqref{BrownianKernel} and \eqref{alphabetaBoundBrownianKernel}, it then follows that the $\alpha_{\rm{B}},\beta_{\rm{B}}$ in \eqref{gammaClassicalSBP} can be expressed as
\begin{align}
\alpha_{\rm{B}} = \dfrac{\exp\left(-\widetilde{\alpha}_{\rm{B}}/(4\varepsilon)\right)}{\sqrt{(4\pi\varepsilon)^{n}}}, \quad \beta_{\rm{B}} = \dfrac{\exp\left(-\widetilde{\beta}_{\rm{B}}/(4\varepsilon)\right)}{\sqrt{(4\pi\varepsilon)^{n}}}.
\label{BrownianTildeAndNotilde}
\end{align}
Consequently, we can rewrite \eqref{gammaClassicalSBP} as
\begin{align}
\gamma_{\rm{B}} = \tanh^{2}\left(\dfrac{\widetilde{\alpha}_{\rm{B}}-\widetilde{\beta}_{\rm{B}}}{8\varepsilon}\right) \in (0,1).
\label{gammaBintermsoftilde}
\end{align}
\begin{remark}\label{Remark:betatildezero}
Notice from \eqref{defalphatildebetatilde} and \eqref{BrownianTildeAndNotilde} that $0\leq \widetilde{\beta}_{\rm{B}} < \widetilde{\alpha}_{\rm{B}} < \infty$ but $0<\alpha < \beta < \infty$. In particular, $\widetilde{\beta}_{\rm{B}} = 0$ if and only if $\mathcal{X}_{0}$ and $\mathcal{X}_{1}$ overlap, i.e., $\mathcal{X}_{0}\cap\mathcal{X}_{1}\neq\emptyset$. 
\end{remark}
\begin{remark}\label{RemarkBrownianContractionCoeff}
Formula \eqref{gammaBintermsoftilde} provides an explicit relation between the worst-case contraction coefficient $\gamma_{\rm{B}}$ for the classical Schr\"{o}dinger system and the problem data given by the tuple $\left(\mathcal{X}_0,\mathcal{X}_{1},\varepsilon\right)$. In the following, we investigate how the worst-case contraction coefficient $\gamma_{\rm{L}}$ associated with the Schr\"{o}dinger system for problem \eqref{LinearSBP} depend on its problem data given by the tuple $\left(\mathcal{X}_0,\mathcal{X}_{1},\varepsilon,\bm{A}(t),\bm{B}(t)\right)$.
\end{remark}


\section{Contraction Coefficient for Linear SBP}\label{SecLinSBPRate}
In this Section, we seek to generalize the development in Sec. \ref{SecClassicalSBPRate} for the linear SBP \eqref{LinearSBP}. Under the stated assumptions \textbf{A1}-\textbf{A3}, the existence-uniqueness for the solution of this problem are guaranteed, and can be computed by solving the associated Schr\"{o}dinger system \eqref{SchrodingerSystem} with $q\equiv q_{\rm{L}}$ where 
\begin{align}
&q_{\rm{L}}\left(0,\bm{x}_{0},1,\bm{x}_1\right)\nonumber\\
&:= \det\left(\bm{M}_{10}\right)^{-1/2}
q_{\rm{B}}\left(0,\bm{M}_{10}^{-1/2}\bm{\Phi}_{10}\bm{x}_0,1,\bm{M}_{10}^{-1/2}\bm{x}_{1}\right).\label{LinearKernel}
\end{align}

\subsection{Contraction Coefficient}\label{subsecContractionCoeffLinear}
Notice that the problem data for the linear SBP involves both the endpoint PDFs (with their compact supports) as well as the dynamical coefficients $\bm{A}(t),\bm{B}(t),\varepsilon$ in \eqref{ControlledLinearSystem}. Intuition suggests that the rate of convergence will differ for different choices of controllable pair $(\bm{A}(t),\bm{B}(t))$ while keeping $(\mathcal{X}_0,\mathcal{X}_1,\varepsilon)$ fixed. For instance, faster (resp. slower) convergence is expected for linear systems which are ``easier (resp. harder) to control" than others. So we anticipate that the worst-case contraction coefficient $\gamma_{\rm{L}}$ in this case will depend on the controllability Gramian \eqref{ControllabGramian}.

We have the following result.
\begin{theorem}\label{ThmgammaLinear}
Consider the linear SBP \eqref{LinearSBP} with assumptions \textbf{A1}-\textbf{A3}. The associated Schr\"{o}dinger system \eqref{SchrodingerSystem} with $q\equiv q_{\rm{L}}$ has worst-case contraction coefficient $\gamma_{\rm{L}}\in(0,1)$, given by
\begin{align}
\gamma_{\rm{L}}=\tanh^{2}\left(\dfrac{\widetilde{\alpha}_{\rm{L}}-\widetilde{\beta}_{\rm{L}}}{8\varepsilon}\right),
\label{gammaLintermsoftilde}
\end{align}
where
\begin{subequations}
\begin{align}
\!\!\widetilde{\alpha}_{\rm{L}}\!\! &:=\!\!\! \underset{\bm{x}_0\in\mathcal{X}_0, \bm{x}_1\in\mathcal{X}_1}{\max}\!\!\left(\bm{\Phi}_{10}\bm{x}_0 - \bm{x}_1\right)^{\!\top}\!\!\bm{M}_{10}^{-1}\!\left(\bm{\Phi}_{10}\bm{x}_0 - \bm{x}_1\right),\label{alphatildeLinear}\\ \!\!\widetilde{\beta}_{\rm{L}}\!\! &:=\!\!\! \underset{\bm{x}_0\in\mathcal{X}_0, \bm{x}_1\in\mathcal{X}_1}{\min}\!\!\left(\bm{\Phi}_{10}\bm{x}_0 - \bm{x}_1\right)^{\!\top}\!\!\bm{M}_{10}^{-1}\!\!\left(\bm{\Phi}_{10}\bm{x}_0 - \bm{x}_1\right).\label{betatildeLinear}
\end{align}    
\label{defLinearalphatildebetatilde}
\end{subequations}
\end{theorem}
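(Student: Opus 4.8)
The strategy is to reduce the linear SBP kernel $q_{\rm{L}}$ to the Brownian case already handled in Sec.~\ref{SecClassicalSBPRate}, and then invoke the Hilbert-projective-metric contraction estimate \cite[Lemma 5]{chen2016entropic} verbatim. That estimate says: whenever the kernel $q$ is continuous and satisfies $0<\alpha\le q\le\beta<\infty$ on the compact product set $\mathcal{X}_0\times\mathcal{X}_1$, a single pass of the recursion in Fig.~\ref{CircularDiagm} contracts the Hilbert metric by the factor $\tanh^2\!\big(\tfrac12\log(\beta/\alpha)\big)$, and this factor is the worst case over all $\rho_0,\rho_1$ supported on $\mathcal{X}_0,\mathcal{X}_1$. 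So the entire task is to identify the right $\alpha_{\rm{L}},\beta_{\rm{L}}$ for $q\equiv q_{\rm{L}}$ and then simplify.

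First I would substitute the Brownian kernel formula \eqref{BrownianKernel} into the expression \eqref{LinearKernel} for $q_{\rm{L}}$. Using $\|\bm{M}_{10}^{-1/2}\bm{\Phi}_{10}\bm{x}_0-\bm{M}_{10}^{-1/2}\bm{x}_1\|_2^2=(\bm{\Phi}_{10}\bm{x}_0-\bm{x}_1)^{\top}\bm{M}_{10}^{-1}(\bm{\Phi}_{10}\bm{x}_0-\bm{x}_1)$, this gives
\begin{align}
q_{\rm{L}}(0,\bm{x}_0,1,\bm{x}_1)=\frac{\exp\!\big(-\tfrac{1}{4\varepsilon}(\bm{\Phi}_{10}\bm{x}_0-\bm{x}_1)^{\top}\bm{M}_{10}^{-1}(\bm{\Phi}_{10}\bm{x}_0-\bm{x}_1)\big)}{\sqrt{(4\pi\varepsilon)^{n}\det(\bm{M}_{10})}}.\nonumber
\end{align}
Under \textbf{A2} the Gramian $\bm{M}_{10}\succ\bm{0}$, so $\bm{M}_{10}^{-1}\succ\bm{0}$ and the quadratic form $(\bm{x}_0,\bm{x}_1)\mapsto(\bm{\Phi}_{10}\bm{x}_0-\bm{x}_1)^{\top}\bm{M}_{10}^{-1}(\bm{\Phi}_{10}\bm{x}_0-\bm{x}_1)$ is continuous and nonnegative; by \textbf{A1} the map $t\mapsto\bm{\Phi}_{10}$ is well defined, and by \textbf{A3} the set $\mathcal{X}_0\times\mathcal{X}_1$ is compact, so this form attains a finite maximum $\widetilde{\alpha}_{\rm{L}}$ and minimum $\widetilde{\beta}_{\rm{L}}$ as defined in \eqref{defLinearalphatildebetatilde}. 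Since $q_{\rm{L}}$ is a strictly decreasing function of that form, $q_{\rm{L}}$ itself is continuous on $\mathcal{X}_0\times\mathcal{X}_1$ and attains its extremes there, namely
\begin{align}
\alpha_{\rm{L}}=\frac{\exp(-\widetilde{\alpha}_{\rm{L}}/(4\varepsilon))}{\sqrt{(4\pi\varepsilon)^{n}\det(\bm{M}_{10})}},\qquad
\beta_{\rm{L}}=\frac{\exp(-\widetilde{\beta}_{\rm{L}}/(4\varepsilon))}{\sqrt{(4\pi\varepsilon)^{n}\det(\bm{M}_{10})}},\nonumber
\end{align}
with $0<\alpha_{\rm{L}}\le\beta_{\rm{L}}<\infty$. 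This is exactly the linear-SBP analogue of \eqref{alphabetaBoundBrownianKernel}--\eqref{BrownianTildeAndNotilde}.

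Next I would compute $\log(\beta_{\rm{L}}/\alpha_{\rm{L}})=(\widetilde{\alpha}_{\rm{L}}-\widetilde{\beta}_{\rm{L}})/(4\varepsilon)$, the normalizing prefactor $\sqrt{(4\pi\varepsilon)^n\det(\bm{M}_{10})}$ cancelling, and feed this into $\tanh^2(\tfrac12\log(\beta_{\rm{L}}/\alpha_{\rm{L}}))$ to obtain \eqref{gammaLintermsoftilde}. Finally I would check the range claim $\gamma_{\rm{L}}\in(0,1)$: finiteness of $\widetilde{\alpha}_{\rm{L}}$ (compactness) keeps the $\tanh$ argument finite, hence $\gamma_{\rm{L}}<1$, while $\widetilde{\alpha}_{\rm{L}}>\widetilde{\beta}_{\rm{L}}$ for non-degenerate supports keeps it nonzero, hence $\gamma_{\rm{L}}>0$; this mirrors Remark~\ref{Remark:betatildezero}. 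The only genuinely delicate point is verifying that the contraction result of \cite[Lemma 5]{chen2016entropic} is applicable to $q_{\rm{L}}$ rather than $q_{\rm{B}}$ — that is, that the cone-preserving/ Birkhoff argument only uses the two-sided positive bound and continuity of the kernel on the compact support product, both of which we have just established. Once that is noted, the rest is the elementary algebra above.
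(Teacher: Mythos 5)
Your proposal is correct and follows essentially the same route as the paper: write out $q_{\rm{L}}$ explicitly, use positive definiteness of $\bm{M}_{10}$ and compactness of $\mathcal{X}_0\times\mathcal{X}_1$ to extract the two-sided bounds $\alpha_{\rm{L}},\beta_{\rm{L}}$ in terms of $\widetilde{\alpha}_{\rm{L}},\widetilde{\beta}_{\rm{L}}$, cancel the normalizing prefactor inside $\log(\beta_{\rm{L}}/\alpha_{\rm{L}})$, and feed into the $\tanh^2$ formula from \cite[Lemma 5]{chen2016entropic}. Your closing remark about checking that the Birkhoff--Hilbert contraction argument only needs continuity and two-sided positivity of the kernel (not the specific Brownian form) is a point the paper leaves implicit but is indeed the right thing to flag.
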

\begin{proof}
For a given $\varepsilon>0$, consider the uncontrolled Ornstein-Uhlenbeck (OU) process with time-varying coefficients: $$\differential\bm{x}(t) = \bm{A}(t)\bm{x}(t)\differential t + \sqrt{2\varepsilon}\bm{B}(t)\differential\bm{w}(t),$$ 
with the associated Markov kernel \eqref{LinearKernel} as
\begin{align}
&q_{\rm{L}}\left(0,\bm{x}_{0},1,\bm{x}_1\right)\nonumber\\
&= \dfrac{\exp\left(-\dfrac{\left(\bm{\Phi}_{10}\bm{x}_0 - \bm{x}_1\right)^{\top}\bm{M}_{10}^{-1}\left(\bm{\Phi}_{10}\bm{x}_0 - \bm{x}_1\right)}{4\varepsilon}\right)}{\sqrt{\left(4\pi\varepsilon\right)^{n}\det(\bm{M}_{10})}}.
\label{OUKernel}
\end{align}
Recall that $\bm{M}_{10}$ being symmetric positive definite, so are its inverse $\bm{M}_{10}^{-1}$ and the principal square root $\bm{M}_{10}^{-1/2}$. Thus \eqref{OUKernel} is well-defined.

As was the case for the kernel $q_{\rm{B}}$ in \eqref{BrownianKernel}, the kernel $q_{\rm{L}}$ too is continuous and positive for all $(\bm{x}_0,\bm{x}_1)\in\mathcal{X}_0\times\mathcal{X}_{1}$. Compactness of $\mathcal{X}_0,\mathcal{X}_1$ implies that there exist constants $\alpha_{\rm{L}},\beta_{\rm{L}}$ given by \eqref{defLinearalphatildebetatilde} such that $0<\alpha_{\rm{L}} \leq \beta_{\rm{L}} < \infty$, and 
\begin{align}
\alpha_{\rm{L}} \leq q_{\rm{L}}\left(0,\bm{x}_{0},1,\bm{x}_1\right) \leq \beta_{\rm{L}} \quad\forall\:(\bm{x}_0,\bm{x}_1)\in \mathcal{X}_0\times\mathcal{X}_1.
\label{alphabetaBoundLinearKernel}
\end{align}

From \eqref{defLinearalphatildebetatilde}, \eqref{OUKernel} and \eqref{alphabetaBoundLinearKernel}, we then obtain the following analogue of
\eqref{BrownianTildeAndNotilde}:
\begin{align}
\alpha_{\rm{L}} \!=\! \dfrac{\exp\left(-\widetilde{\alpha}_{\rm{L}}/(4\varepsilon)\right)}{\sqrt{(4\pi\varepsilon)^{n}\det(\bm{M}_{10})}}, \: \beta_{\rm{L}} \!=\! \dfrac{\exp\left(-\widetilde{\beta}_{\rm{L}}/(4\varepsilon)\right)}{\sqrt{(4\pi\varepsilon)^{n}\det(\bm{M}_{10})}}.
\label{LinearTildeAndNotilde}
\end{align}
Consequently, the worst-case contraction coefficient
\begin{align}
\gamma_{\rm{L}} &= \tanh^{2}\left(\frac{1}{2}\log\left(\dfrac{\beta_{\rm{L}}}{\alpha_{\rm{L}}}\right)\right)=\tanh^{2}\left(\dfrac{\widetilde{\alpha}_{\rm{L}}-\widetilde{\beta}_{\rm{L}}}{8\varepsilon}\right).
\label{gammaLinearSBP}    
\end{align}
\end{proof}
\begin{remark}
Due to the sub-multiplicative nature of 2 norm, the objective in \eqref{defLinearalphatildebetatilde}, in general, satisfies the bound
\begin{align}
\dfrac{\|\bm{\Phi}_{10}\bm{x}_{0}-\bm{x}_{1}\|_2^2}{\lambda_{\max}\left(\bm{M}_{10}\right)} &\leq \|\bm{M}_{10}^{-1/2}\bm{\Phi}_{10}\bm{x}_0 - \bm{M}_{10}^{-1/2}\bm{x}_{1}\|_2^2\nonumber\\
&\leq \dfrac{\|\bm{\Phi}_{10}\bm{x}_{0}-\bm{x}_{1}\|_2^2}{\lambda_{\min}\left(\bm{M}_{10}\right)},
\label{WeightedNormBound}
\end{align}
where $\lambda_{\max},\lambda_{\min}$ denote the maximum and minimum eigenvalue of $\bm{M}_{10}$, respectively.
\end{remark} 

\subsection{Control-theoretic Interpretation}\label{subsec:controltheoreticinterpretation}
We note that the objective in \eqref{defLinearalphatildebetatilde} is precisely the minimum cost for the deterministic optimal control problem:
\begin{subequations}
\begin{align}
\underset{\bm{u}}{\min}\quad&\int_{0}^{1}\|\bm{u}\|_2^2\:\differential t\label{LinDOCPObj}\\
\text{subject to}\quad&\dot{\bm{x}}^{\bm{u}}=\bm{A}(t)\bm{x}^{u}+\bm{B}(t)\bm{u},\label{ControlledLTV}\\
&\bm{x}^{\bm{u}}(t=0)=\bm{x}_0, \quad \bm{x}^{\bm{u}}(t=1)=\bm{x}_1,\label{LinDOCPEndpointConstr}
\end{align}
\label{LinDOCP}
\end{subequations}
i.e., the cost for minimum effort steering of a controllable LTV system from a fixed initial state $\bm{x}_0$ to a fixed terminal state $\bm{x}_1$ over the given time horizon $[0,1]$. See e.g., \cite[p. 194]{lee1967foundations}.

For fixed $(\bm{A}(t),\bm{B}(t))$, and therefore fixed $\bm{\Phi}_{10},\bm{M}_{10}$, the optimal cost \eqref{LinDOCPObj} varies with the variation in endpoints $\bm{x}_{0}\in\mathcal{X}_0, \bm{x}_{1}\in\mathcal{X}_1$. Thus, $\widetilde{\alpha}_{\rm{L}}$ (resp. $\widetilde{\beta}_{\rm{L}}$) equals the worst-case (resp. best-case) optimal state transfer cost for the source and target supports $\mathcal{X}_0,\mathcal{X}_1$. Recall that $\tanh^{2}(\cdot)$ is increasing over positive real. Hence $\gamma_{\rm{L}}$ in \eqref{gammaLintermsoftilde} is an increasing function of the \emph{range} of optimal state transfer cost: $\widetilde{\alpha}_{\rm{L}} - \widetilde{\beta}_{\rm{L}}$.

The $\varepsilon$ in the denominator in \eqref{gammaLintermsoftilde} implies that a stronger process noise helps to reduce $\gamma_{L}$ with other parameters held fixed, thus improving the contraction coefficient, as expected.  

In the following, we provide geometric insights for \eqref{defalphatildebetatilde} and \eqref{defLinearalphatildebetatilde}. We then discuss the computation of $\gamma_{\rm{L}}$.


\section{Geometric Interpretation and Computing $\gamma_{\rm{L}}$ for Convex $\mathcal{X}_{0},\mathcal{X}_{1}$}\label{secComputationConvex}

In \eqref{defalphatildebetatilde} and \eqref{defLinearalphatildebetatilde}, the $\widetilde{\alpha}_{\rm{B}}, \widetilde{\alpha}_{\rm{L}}$ (resp. $\widetilde{\beta}_{\rm{B}}, \widetilde{\beta}_{\rm{L}}$) can be seen as the maximal (resp. minimal) separation between the sets $\mathcal{X}_{0},\mathcal{X}_{1}$ or their linear transforms.   

When the compact sets $\mathcal{X}_{0},\mathcal{X}_{1}$ are also convex, then computing the minimum values in \eqref{betatildeClassical} and \eqref{betatildeLinear}, in general, reduce to solving the ``best approximation pair" problem; see e.g.,  \cite{bauschke2004finding}. Then,  \eqref{betatildeClassical} and \eqref{betatildeLinear} can be numerically computed using the Gilbert-Johnson-Keerthi (GJK) algorithm or its improved variants \cite{gilbert1988fast,cameron1997enhancing,montanari2017improving}.

On the other hand, the maximum values in \eqref{alphatildeClassical} and \eqref{alphatildeLinear} correspond to the \emph{squared diameters} of the Cartesian products $\mathcal{X}_{0}\times\mathcal{X}_{1}$ and $\bm{M}_{10}^{-1/2}\bm{\Phi}_{10}\mathcal{X}_{0} \times \bm{M}_{10}^{-1/2}\mathcal{X}_{1}$, respectively. When $\mathcal{X}_0,\mathcal{X}_{1}$ are compact and convex, so are these Cartesian products. Therefore, the maximum values in \eqref{alphatildeClassical} and \eqref{alphatildeLinear} must be attained at the boundaries of the sets $\mathcal{X}_{0}\times\mathcal{X}_{1}$ and $\bm{M}_{10}^{-1/2}\bm{\Phi}_{10}\mathcal{X}_{0} \times \bm{M}_{10}^{-1/2}\mathcal{X}_{1}$, respectively. However, numerical computation of these maximum values can be cumbersome depending on what kind of description for the convex sets $\mathcal{X}_0,\mathcal{X}_{1}$ are available.

In Theorem \ref{Thm:alphatildebetatildeSptFn} next, we point out that for $\mathcal{X}_0,\mathcal{X}_{1}$ convex, \eqref{defalphatildebetatilde} and \eqref{defLinearalphatildebetatilde} can be expressed in terms of the support functions (see \eqref{DefSptFn}) of these sets, thus  offering geometric insights on these quantities. 

\begin{theorem}\label{Thm:alphatildebetatildeSptFn}
Consider compact convex $\mathcal{X}_0,\mathcal{X}_{1}$ with respective support functions $h_{\mathcal{X}_0}(\cdot), h_{\mathcal{X}_1}(\cdot)$. Let $\mathbb{S}^{n-1}$ denote the Euclidean unit sphere in $\mathbb{R}^{n}$. Then \eqref{defalphatildebetatilde} can be expressed as
\begin{subequations}
\begin{align}
\widetilde{\alpha}_{\rm{B}} &= \bigg\{\underset{\bm{y}\in\mathbb{S}^{n-1}}{\max}\left(h_{\mathcal{X}_0}(\bm{y}) + h_{\mathcal{X}_1}(-\bm{y})\right)\bigg\}^2, \label{alphatildeClassicalSptFn}\\ \widetilde{\beta}_{\rm{B}} &= \bigg\{\underset{\bm{y}\in\mathbb{S}^{n-1}}{\min}\left(h_{\mathcal{X}_0}(\bm{y}) + h_{\mathcal{X}_1}(-\bm{y})\right)\bigg\}^2.
\label{betatildeClassicalSptFn}
\end{align}
\label{alphatildebetatildeClassicalSptFn}   \end{subequations}
Furthermore, \eqref{defLinearalphatildebetatilde} can be expressed as
\begin{subequations}
\begin{align}
\widetilde{\alpha}_{\rm{L}}\! &= \!\bigg\{\!\underset{\bm{y}\in\mathbb{S}^{n-1}}{\max}\!\!\left(\!h_{\mathcal{X}_0}\!\!\left(\bm{\Phi}_{10}^{\top}\bm{M}_{10}^{-1/2}\bm{y}\!\right)\! + \!h_{\mathcal{X}_1}\!\!\left(\!-\bm{M}_{10}^{-1/2}\bm{y}\!\right)\!\right)\!\!\bigg\}^{\!2}, \label{alphatildeLinearSptFn}\\ \widetilde{\beta}_{\rm{L}}\! &= \!\bigg\{\!\underset{\bm{y}\in\mathbb{S}^{n-1}}{\min}\!\!\left(\!h_{\mathcal{X}_0}\!\!\left(\bm{\Phi}_{10}^{\top}\bm{M}_{10}^{-1/2}\bm{y}\!\right)\! +\! h_{\mathcal{X}_1}\!\!\left(-\bm{M}_{10}^{-1/2}\bm{y}\!\right)\!\right)\!\!\bigg\}^{\!2}.
\label{betatildeLinearSptFn}
\end{align}
\label{alphatildebetatildeLinearSptFn}   
\end{subequations}
\end{theorem}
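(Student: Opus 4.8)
The plan is to funnel both \eqref{defalphatildebetatilde} and \eqref{defLinearalphatildebetatilde} through one geometric computation: the largest and smallest Euclidean norm of a point in the Minkowski difference set $\mathcal{D}:=\mathcal{X}_0+(-\mathcal{X}_1)=\{\bm{x}_0-\bm{x}_1\mid\bm{x}_0\in\mathcal{X}_0,\,\bm{x}_1\in\mathcal{X}_1\}$, combined with the duality between the Euclidean norm and the support function. Since $\mathcal{X}_0,\mathcal{X}_1$ are compact convex, so is $\mathcal{D}$; by distributivity of $h_{(\cdot)}$ over Minkowski sums together with \eqref{SptFnOfAffineTransformation} applied to $-\mathcal{X}_1$ (i.e., $\bm{T}=-\bm{I}_n$, $\bm{\tau}=\bm{0}$), we get $h_{\mathcal{D}}(\bm{y})=h_{\mathcal{X}_0}(\bm{y})+h_{\mathcal{X}_1}(-\bm{y})$. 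In this notation $\widetilde{\alpha}_{\rm{B}}=\max_{\bm{z}\in\mathcal{D}}\|\bm{z}\|_2^2$ and $\widetilde{\beta}_{\rm{B}}=\min_{\bm{z}\in\mathcal{D}}\|\bm{z}\|_2^2$, so \eqref{alphatildebetatildeClassicalSptFn} reduces to showing $\max_{\bm{z}\in\mathcal{D}}\|\bm{z}\|_2=\max_{\bm{y}\in\mathbb{S}^{n-1}}h_{\mathcal{D}}(\bm{y})$ and $\min_{\bm{z}\in\mathcal{D}}\|\bm{z}\|_2=-\min_{\bm{y}\in\mathbb{S}^{n-1}}h_{\mathcal{D}}(\bm{y})$, followed by squaring.

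For $\widetilde{\alpha}_{\rm{B}}$ I would use $\|\bm{z}\|_2=\max_{\bm{y}\in\mathbb{S}^{n-1}}\langle\bm{y},\bm{z}\rangle$ and interchange the two suprema, $\max_{\bm{z}\in\mathcal{D}}\|\bm{z}\|_2=\max_{\bm{y}\in\mathbb{S}^{n-1}}\max_{\bm{z}\in\mathcal{D}}\langle\bm{y},\bm{z}\rangle=\max_{\bm{y}\in\mathbb{S}^{n-1}}h_{\mathcal{D}}(\bm{y})$, both maxima being attained by compactness. Since this quantity is nonnegative, squaring is order-preserving, and substituting $h_{\mathcal{D}}(\bm{y})=h_{\mathcal{X}_0}(\bm{y})+h_{\mathcal{X}_1}(-\bm{y})$ yields \eqref{alphatildeClassicalSptFn}. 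This step is routine.

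The $\widetilde{\beta}$ identities are the crux and need a duality/separation argument rather than a mere interchange of suprema. Assuming first $\mathcal{X}_0\cap\mathcal{X}_1=\emptyset$, i.e., $\bm{0}\notin\mathcal{D}$, let $\bm{z}^{\star}=\argmin_{\bm{z}\in\mathcal{D}}\|\bm{z}\|_2\neq\bm{0}$ (which exists and is unique by compactness and strict convexity of $\|\cdot\|_2^2$), and set $\bm{y}^{\star}:=\bm{z}^{\star}/\|\bm{z}^{\star}\|_2\in\mathbb{S}^{n-1}$. The projection (variational) inequality $\langle\bm{z}^{\star},\bm{z}-\bm{z}^{\star}\rangle\geq 0$ for all $\bm{z}\in\mathcal{D}$ gives $\langle\bm{y}^{\star},\bm{z}\rangle\geq\|\bm{z}^{\star}\|_2$ on $\mathcal{D}$, hence $h_{\mathcal{D}}(-\bm{y}^{\star})=-\min_{\bm{z}\in\mathcal{D}}\langle\bm{y}^{\star},\bm{z}\rangle\leq-\|\bm{z}^{\star}\|_2$; conversely $h_{\mathcal{D}}(\bm{y})\geq\langle\bm{y},\bm{z}^{\star}\rangle\geq-\|\bm{z}^{\star}\|_2$ for every $\bm{y}\in\mathbb{S}^{n-1}$ by Cauchy--Schwarz. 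Therefore $\min_{\bm{y}\in\mathbb{S}^{n-1}}h_{\mathcal{D}}(\bm{y})=-\|\bm{z}^{\star}\|_2=-\sqrt{\widetilde{\beta}_{\rm{B}}}$, and squaring (with $h_{\mathcal{D}}(\bm{y})=h_{\mathcal{X}_0}(\bm{y})+h_{\mathcal{X}_1}(-\bm{y})$) gives \eqref{betatildeClassicalSptFn}. Equivalently, one may write $\min_{\bm{z}\in\mathcal{D}}\|\bm{z}\|_2=\min_{\bm{z}\in\mathcal{D}}\max_{\|\bm{y}\|_2\leq 1}\langle\bm{y},\bm{z}\rangle$, swap $\min$ and $\max$ via Sion's minimax theorem, and then pass from the unit ball back to $\mathbb{S}^{n-1}$ using positive homogeneity of $h_{\mathcal{D}}$. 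The delicate point I would treat most carefully is the boundary behavior when the supports overlap: then $\widetilde{\beta}_{\rm{B}}=0$ by Remark \ref{Remark:betatildezero}, which matches the right-hand side exactly when $\bm{0}\in\partial\mathcal{D}$ (a supporting hyperplane at $\bm{0}$ supplies a unit $\bm{y}$ with $h_{\mathcal{D}}(\bm{y})=0$), and this is the case against which the argument must be checked.

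For the linear SBP everything reduces to the classical case by a change of variables. Since $\bm{M}_{10}\succ\bm{0}$, we have $\bm{M}_{10}^{-1}=\bm{M}_{10}^{-1/2}\bm{M}_{10}^{-1/2}$ with $\bm{M}_{10}^{-1/2}$ symmetric, so the quadratic form in \eqref{defLinearalphatildebetatilde} equals $\|\bm{M}_{10}^{-1/2}\bm{\Phi}_{10}\bm{x}_0-\bm{M}_{10}^{-1/2}\bm{x}_1\|_2^2$; hence $\widetilde{\alpha}_{\rm{L}},\widetilde{\beta}_{\rm{L}}$ are precisely $\widetilde{\alpha}_{\rm{B}},\widetilde{\beta}_{\rm{B}}$ for the compact convex sets $\mathcal{Y}_0:=\bm{M}_{10}^{-1/2}\bm{\Phi}_{10}\mathcal{X}_0$ and $\mathcal{Y}_1:=\bm{M}_{10}^{-1/2}\mathcal{X}_1$. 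Applying \eqref{alphatildebetatildeClassicalSptFn} to $\mathcal{Y}_0,\mathcal{Y}_1$ and rewriting the support functions via \eqref{SptFnOfAffineTransformation} (with $\bm{\tau}=\bm{0}$ and $\bm{T}=\bm{M}_{10}^{-1/2}\bm{\Phi}_{10}$, so $\bm{T}^{\top}=\bm{\Phi}_{10}^{\top}\bm{M}_{10}^{-1/2}$, respectively $\bm{T}=\bm{M}_{10}^{-1/2}=\bm{T}^{\top}$) yields $h_{\mathcal{Y}_0}(\bm{y})=h_{\mathcal{X}_0}(\bm{\Phi}_{10}^{\top}\bm{M}_{10}^{-1/2}\bm{y})$ and $h_{\mathcal{Y}_1}(-\bm{y})=h_{\mathcal{X}_1}(-\bm{M}_{10}^{-1/2}\bm{y})$, which is exactly \eqref{alphatildebetatildeLinearSptFn}. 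No genuinely new difficulty appears here beyond bookkeeping with transposes, and the same overlap caveat carries over to $\widetilde{\beta}_{\rm{L}}$ with $\mathcal{Y}_0,\mathcal{Y}_1$ in place of $\mathcal{X}_0,\mathcal{X}_1$.
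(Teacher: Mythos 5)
Your derivations of \eqref{alphatildeClassicalSptFn} and \eqref{alphatildeLinearSptFn}, and the reduction of \eqref{defLinearalphatildebetatilde} to the classical case via the transformed sets $\bm{M}_{10}^{-1/2}\bm{\Phi}_{10}\mathcal{X}_0$, $\bm{M}_{10}^{-1/2}\mathcal{X}_1$ together with \eqref{SptFnOfAffineTransformation}, coincide with the paper's. For $\widetilde{\beta}_{\rm{B}}$, however, your primary argument is genuinely different. The paper relaxes the inner minimization from $\mathbb{S}^{n-1}$ to $\mathbb{B}^{n}$, invokes the Von Neumann minimax theorem to swap $\min$ and $\max$, and then uses positive homogeneity of $h_{\mathcal{D}}$ to push the ball-minimizer back onto the sphere. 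You instead project $\bm{0}$ onto $\mathcal{D}=\mathcal{X}_0-\mathcal{X}_1$, use the variational inequality $\langle\bm{z}^{\star},\bm{z}-\bm{z}^{\star}\rangle\geq 0$ to exhibit the optimal unit normal $\bm{y}^{\star}=\bm{z}^{\star}/\|\bm{z}^{\star}\|_2$, and close with Cauchy--Schwarz. This projection-theorem route is more elementary (no minimax theorem) and makes the optimal supporting hyperplane explicit; you also sketch the paper's minimax route as an alternative. Both approaches buy the same identity; yours is cleaner, the paper's generalizes more mechanically.

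The caveat you raise at the end is, however, more serious than you treat it, and it affects the paper's argument and the theorem statement as well. You verify the tangency case $\bm{0}\in\partial\mathcal{D}$, where a supporting hyperplane gives a unit $\bm{y}$ with $h_{\mathcal{D}}(\bm{y})=0$, consistent with $\widetilde{\beta}_{\rm{B}}=0$. But when $\bm{0}\in\mathrm{int}(\mathcal{D})$, i.e., when $\mathrm{int}(\mathcal{X}_0)\cap\mathrm{int}(\mathcal{X}_1)\neq\emptyset$ (a case the paper allows, cf.\ Remark~\ref{Remark:betatildezero}), we have $h_{\mathcal{D}}(\bm{y})>0$ for every nonzero $\bm{y}$, hence $\min_{\bm{y}\in\mathbb{S}^{n-1}}h_{\mathcal{D}}(\bm{y})>0$ by compactness, while $\widetilde{\beta}_{\rm{B}}=0$. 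Concretely, with $n=1$ and $\mathcal{X}_0=\mathcal{X}_1=[-1,1]$, one has $\widetilde{\beta}_{\rm{B}}=0$ but the right-hand side of \eqref{betatildeClassicalSptFn} evaluates to $4$. Your projection argument assumes $\bm{0}\notin\mathcal{D}$ and does not cover this case, and the paper's rescaling step (``if this minimum value is zero, we can scale the $\arg\min$ to lie on the unit sphere'') fails because the unique ball-minimizer is then $\bm{y}=\bm{0}$, which cannot be rescaled. The identities \eqref{betatildeClassicalSptFn} and \eqref{betatildeLinearSptFn} therefore require that the interiors of the supports (respectively, of their linear images) be disjoint; that should be stated as a hypothesis rather than left as an unresolved caveat.
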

\begin{proof}
Consider the set difference
$$\mathcal{X}_0 - \mathcal{X}_{1}:=\mathcal{X}_{0} + \left(-\mathcal{X}_{1}\right)=\{\bm{x}_0 - \bm{x}_{1} \mid \bm{x}_0\in\mathcal{X}_{0},\bm{x}_1\in\mathcal{X}_{1}\}.$$ 
Let $\mathbb{B}^{n}:={\rm{conv}}\left(\mathbb{S}^{n-1}\right)$, the convex hull of $\mathbb{S}^{n-1}$, i.e., the $n$ dimensional unit Euclidean ball.

From \eqref{alphatildeClassical}, we have
\begin{align}
\widetilde{\alpha}_{\rm{B}} &= \bigg\{\underset{\bm{x}_0\in\mathcal{X}_{0},\bm{x}_1\in\mathcal{X}_{1}}{\max}\|\bm{x}_0 - \bm{x}_{1}\|_2\bigg\}^{2} \nonumber\\
&= \bigg\{\underset{\bm{x}\in\mathcal{X}_{0}-\mathcal{X}_{1}}{\max}\|\bm{x}\|_2\bigg\}^{2}\nonumber\\
&= \bigg\{\underset{\bm{x}\in\mathcal{X}_{0}-\mathcal{X}_{1}}{\max}\bigg\langle\frac{\bm{x}}{\|\bm{x}\|_2},\bm{x}\bigg\rangle\bigg\}^{2}\nonumber\\
&= \bigg\{\underset{\bm{x}\in\mathcal{X}_{0}-\mathcal{X}_{1}}{\max}\;\underset{\bm{y}\in\mathbb{S}^{n-1}}{\max}\langle\bm{y},\bm{x}\rangle\bigg\}^{2}\nonumber\\
&= \bigg\{\underset{\bm{y}\in\mathbb{S}^{n-1}}{\max}\;\underset{\bm{x}\in\mathcal{X}_{0}-\mathcal{X}_{1}}{\max}\langle\bm{y},\bm{x}\rangle\bigg\}^{2}\nonumber\\
&= \bigg\{\underset{\bm{y}\in\mathbb{S}^{n-1}}{\max}\;h_{\mathcal{X}_{0}-\mathcal{X}_{1}}(\bm{y})\bigg\}^{2}
\label{UsingDefSptFn}\\
&= \bigg\{\underset{\bm{y}\in\mathbb{S}^{n-1}}{\max}\;\left(h_{\mathcal{X}_{0}}(\bm{y})+h_{-\mathcal{X}_{1}}(\bm{y})\right)\bigg\}^{2}\label{SptFnDistributive}.
\end{align}
where \eqref{UsingDefSptFn} follows from the definition \eqref{DefSptFn}, and \eqref{SptFnDistributive} holds because support function is distributive over Minkowski sum. Using \eqref{SptFnOfAffineTransformation}, we get $h_{-\mathcal{X}_{1}}(\bm{y}) = h_{\mathcal{X}_{1}}(-\bm{y})$, and therefore \eqref{SptFnDistributive} equals \eqref{alphatildeClassicalSptFn}.

Likewise, from \eqref{betatildeClassical}, we have
\begin{align}
\widetilde{\beta}_{\rm{B}} &= \bigg\{\underset{\bm{x}_0\in\mathcal{X}_{0},\bm{x}_1\in\mathcal{X}_{1}}{\min}\|\bm{x}_0 - \bm{x}_{1}\|_2\bigg\}^{2} \nonumber\\
&= \bigg\{-\underset{\bm{x}\in\mathcal{X}_{0}-\mathcal{X}_{1}}{\max}\left(-\|\bm{x}\|_2\right)\bigg\}^{2}\nonumber\\
&= \bigg\{\underset{\bm{x}\in\mathcal{X}_{0}-\mathcal{X}_{1}}{\max}\bigg\langle\frac{-\bm{x}}{\|\bm{x}\|_2},\bm{x}\bigg\rangle\bigg\}^{2}\nonumber\\
&= \bigg\{\underset{\bm{x}\in\mathcal{X}_{0}-\mathcal{X}_{1}}{\max}\;\underset{\bm{y}\in\mathbb{S}^{n-1}}{\min}\langle\bm{y},\bm{x}\rangle\bigg\}^{2}\label{BeforeConvexification}\\
&= \bigg\{\underset{\bm{x}\in\mathcal{X}_{0}-\mathcal{X}_{1}}{\max}\;\underset{\bm{y}\in\mathbb{B}^{n}}{\min}\langle\bm{y},\bm{x}\rangle\bigg\}^{2},\label{AfterConvexification}
\end{align}
where the last line is due to the linear objective which allows lossless convexification for the inner minimization in \eqref{BeforeConvexification}. This can be seen explicitly from the Cauchy-Schwarz inequality: $-\|\bm{y}\|_2\|\bm{x}\|_2\leq \langle \bm{y},\bm{x}\rangle$ where the equality is achieved when $\bm{y}$ is an unit vector pointing opposite to $\bm{x}$.

Since the sets $\mathcal{X}_{0}-\mathcal{X}_{1},\mathbb{B}^{n}$ are both compact convex, applying the Von Neumann minimiax theorem \cite{v1928theorie,du1995minimax}, we rewrite \eqref{AfterConvexification} as
\begin{align}
\widetilde{\beta}_{\rm{B}} = \bigg\{\!\underset{\bm{y}\in\mathbb{B}^{n}}{\min}\;\underset{\bm{x}\in\mathcal{X}_{0}-\mathcal{X}_{1}}{\max}\langle\bm{y},\bm{x}\rangle\!\bigg\}^{\!2} \!\!=\! \bigg\{\!\underset{\bm{y}\in\mathbb{B}^{n}}{\min}\;h_{\mathcal{X}_{0}-\mathcal{X}_{1}}(\bm{y})\!\bigg\}^{\!2}.
\label{ExchangeMinMax}
\end{align}
We next revert back the feasible set of the minimization in \eqref{ExchangeMinMax} to $\mathbb{S}^{n-1}$. To justify this, note that since the origin is within $\mathbb{B}^{n}$, the minimum in \eqref{ExchangeMinMax} cannot be positive. If this minimum value is zero, we can scale the $\arg\min$ to lie on the unit sphere. So it remains to consider the case when the minimum value $h_{\mathcal{X}_{0}-\mathcal{X}_{1}}(\bm{y}^{\rm{opt}})<0$, achieved by $\bm{y}^{\rm{opt}}$ with $0< \|\bm{y}^{\rm{opt}}\| = \delta < 1$. In other words, $\bm{y}^{\rm{opt}}$ is in the interior of $\mathbb{B}^{n}$. Now consider a vector $\widetilde{\bm{y}}:=\bm{y}^{\rm{opt}}/\delta \in \mathbb{S}^{n-1}$, which is feasible. Thanks to the positive homogeneity of the support function, we have
$h_{\mathcal{X}_{0}-\mathcal{X}_{1}}\left(\widetilde{\bm{y}}\right) = \frac{1}{\delta}h_{\mathcal{X}_{0}-\mathcal{X}_{1}}(\bm{y}^{\rm{opt}})$, which yields
$$h_{\mathcal{X}_{0}-\mathcal{X}_{1}}\left(\widetilde{\bm{y}}\right) = \underbrace{\frac{1}{\delta}}_{>0}\underbrace{h_{\mathcal{X}_{0}-\mathcal{X}_{1}}(\bm{y}^{\rm{opt}})}_{<0} < h_{\mathcal{X}_{0}-\mathcal{X}_{1}}(\bm{y}^{\rm{opt}}),$$
contradicting the supposition that $\bm{y}^{\rm{opt}}$ is a minimizer. So the minimizer must lie on the boundary of the feasible set $\mathbb{B}^{n}$, i.e., on $\mathbb{S}^{n-1}$. Therefore, we can express \eqref{ExchangeMinMax} as 
$$\widetilde{\beta}_{\rm{B}} \!=\! \bigg\{\!\underset{\bm{y}\in\mathbb{S}^{n-1}}{\min}\,h_{\mathcal{X}_{0}-\mathcal{X}_{1}}(\bm{y})\!\bigg\}^{\!2}\!\!=\! \bigg\{\!\underset{\bm{y}\in\mathbb{S}^{n-1}}{\min}\left(\!h_{\mathcal{X}_{0}}(\bm{y})+h_{-\mathcal{X}_{1}}(\bm{y})\!\right)\!\bigg\}^{\!2}$$
which is indeed \eqref{betatildeClassicalSptFn} since $h_{-\mathcal{X}_{1}}(\bm{y}) = h_{\mathcal{X}_{1}}(-\bm{y})$.

To derive \eqref{alphatildebetatildeLinearSptFn}, we start by rewriting \eqref{defLinearalphatildebetatilde} as
\begin{subequations}
\begin{align}
\widetilde{\alpha}_{\rm{L}} &= \underset{\bm{x}_0\in\bm{M}_{10}^{-1/2}\bm{\Phi}_{10}\mathcal{X}_0, \bm{x}_1\in\bm{M}_{10}^{-1/2}\mathcal{X}_1}{\max}\:\|\bm{x}_0 - \bm{x}_1\|_2^2 \label{alphatildeLinearNew}\\
&= \underset{\bm{x}\in\bm{M}_{10}^{-1/2}\bm{\Phi}_{10}\mathcal{X}_0-\bm{M}_{10}^{-1/2}\mathcal{X}_1}{\max}\:\|\bm{x}\|_2^{2},\nonumber\\ 
\widetilde{\beta}_{\rm{L}} &= \underset{\bm{x}_0\in\bm{M}_{10}^{-1/2}\bm{\Phi}_{10}\mathcal{X}_0, \bm{x}_1\in\bm{M}_{10}^{-1/2}\mathcal{X}_1}{\min}\:\|\bm{x}_0 - \bm{x}_1\|_2^2\label{betatildeLinearNew}\\
&=\underset{\bm{x}\in\bm{M}_{10}^{-1/2}\bm{\Phi}_{10}\mathcal{X}_0-\bm{M}_{10}^{-1/2}\mathcal{X}_1}{\min}\:\|\bm{x}\|_2^{2}.\nonumber
\end{align}    
\label{defLinearalphatildebetatildeNew}
\end{subequations}
We the follow by following the same steps as before with the additional usage of the formula \eqref{SptFnOfAffineTransformation} relating the support functions of affine transformed sets in terms of the support functions of their pre-images. This completes the proof.
\end{proof}
\begin{remark}\label{Remark:DistanceBetweenTransformedSets}
The equalities \eqref{alphatildeLinearNew} and  \eqref{betatildeLinearNew} are particularly insightful. They highlight that $\widetilde{\alpha}_{\rm{L}},\widetilde{\beta}_{\rm{L}}$ can respectively be seen as the maximal and minimal separation between the linear transformed sets $\bm{M}_{10}^{-1/2}\bm{\Phi}_{10}\mathcal{X}_0$ and $\bm{M}_{10}^{-1/2}\mathcal{X}_1$. Specializing \eqref{alphatildeLinearNew}-\eqref{betatildeLinearNew} for the classical SBP with $\bm{A}(t)\equiv \bm{0}$, $\bm{B}(t)\equiv\bm{I}$, and thus with $\bm{M}_{10}=\bm{\Phi}_{10}=\bm{I}$, recovers \eqref{defalphatildebetatilde}, which is the maximal and minimal separation between the original supports $\mathcal{X}_0,\mathcal{X}_{1}$, as expected.    
\end{remark}


\subsection{Improved Computation via Preconditioning}\label{subsecPreconditioning}

Previous works such as \cite{kuang2017preconditioning} have explored the use of preconditioning to improve the performance of optimal transport algorithms. The preconditioning procedure described in \cite{kuang2017preconditioning} transforms the measures and corresponding support sets through a deterministic map such that the preconditioned measures are moved closer together, by creating new measures with the same (zero) mean and diagonal covariance matrix. The solutions (e.g., optimal transport map, optimal coupling) to the optimal transport problem, before and after preconditioning, are related to each other in a certain way according to the preconditioning. Such a strategy can be extended to the SBP because the SBP is an entropy-regularized optimal transport problem \cite{mikami2004monge,marino2020optimal}. 

We explore the application of such a preconditioning procedure for improved computation of $\gamma_{\rm{L}}$. The following example illustrates the main idea.

\noindent\textbf{Example 1.} Consider an instance of the linear SBP \eqref{LinearSBP} with time-invariant coefficients
\begin{align*}
    \bm{A}(t) = \begin{bmatrix} 0 & 1\\ 0 & 0\end{bmatrix}, \; \; \; \bm{B}(t) = \begin{bmatrix} 0,  \\ 1 \end{bmatrix},\; \; \; \varepsilon = 0.5,
\end{align*}
i.e., \eqref{ControlledLinearSystem} is noisy double integrator 
$$\differential x_{1}^{u} = x_{2}^{u}\:\differential t, \quad \differential x_{2}^{u} = u\:\differential t + \sqrt{2\varepsilon}\:\differential w.$$
In this case,
\begin{align*}
    \bm{\Phi}_{10} = \begin{bmatrix} 1 & 1 \\ 0 & 1 \end{bmatrix}, \quad \bm{M}_{10}^{-1} = \begin{bmatrix} 12 & -6 \\ -6 & 4 \end{bmatrix}. 
\end{align*}
We consider ellipsoidal supports
$$\mathcal{X}_i=\mathcal{E}_i\left(\bm{c}_i,\bm{S}_i\right):=\{\bm{x}\in\mathbb{R}^{2}\mid (\bm{x}-\bm{c}_i)^{\top}\bm{S}_{i}^{-1}(\bm{x}-\bm{c}_i)\leq 1\}$$
$\forall i\in\{0,1\}$, with respective center vectors
$$\bm{c}_{0} := \bm{\Phi}_{10}^{-1}\bm{M}_{10}^{1/2}\begin{pmatrix}
0\\
3
\end{pmatrix}, \quad \bm{c}_1 := \bm{M}_{10}^{1/2}\begin{pmatrix}
3\\
0
\end{pmatrix},$$
and respective positive definite shape matrices
$$\bm{S}_0 := \bm{\Phi}_{10}^{-1}\bm{M}_{10}\Phi_{10}^{-\top}, \quad \bm{S}_1 := \bm{M}_{10}=\begin{bmatrix}
1/3 & 1/2\\
1/2 & 1
\end{bmatrix}.$$
Then
\begin{subequations}
\begin{align}
\bm{M}_{10}^{-1/2} \bm{\Phi}_{10} \mathcal{X}_0 = \{ (x, y)\in\mathbb{R}^{2} | x^2 + (y-3)^2 \leq 1 \}, \label{PrecondX0}\\
\bm{M}_{10}^{-1/2} \mathcal{X}_1 = \{ (x, y)\in\mathbb{R}^{2} | (x - 3)^2 + y^2 \leq 1 \}. \label{PrecondX1}
\end{align}
\label{PrecondSupportsExample}    
\end{subequations}

\begin{figure}[t]
\centering
    \includegraphics[width=0.99\linewidth]{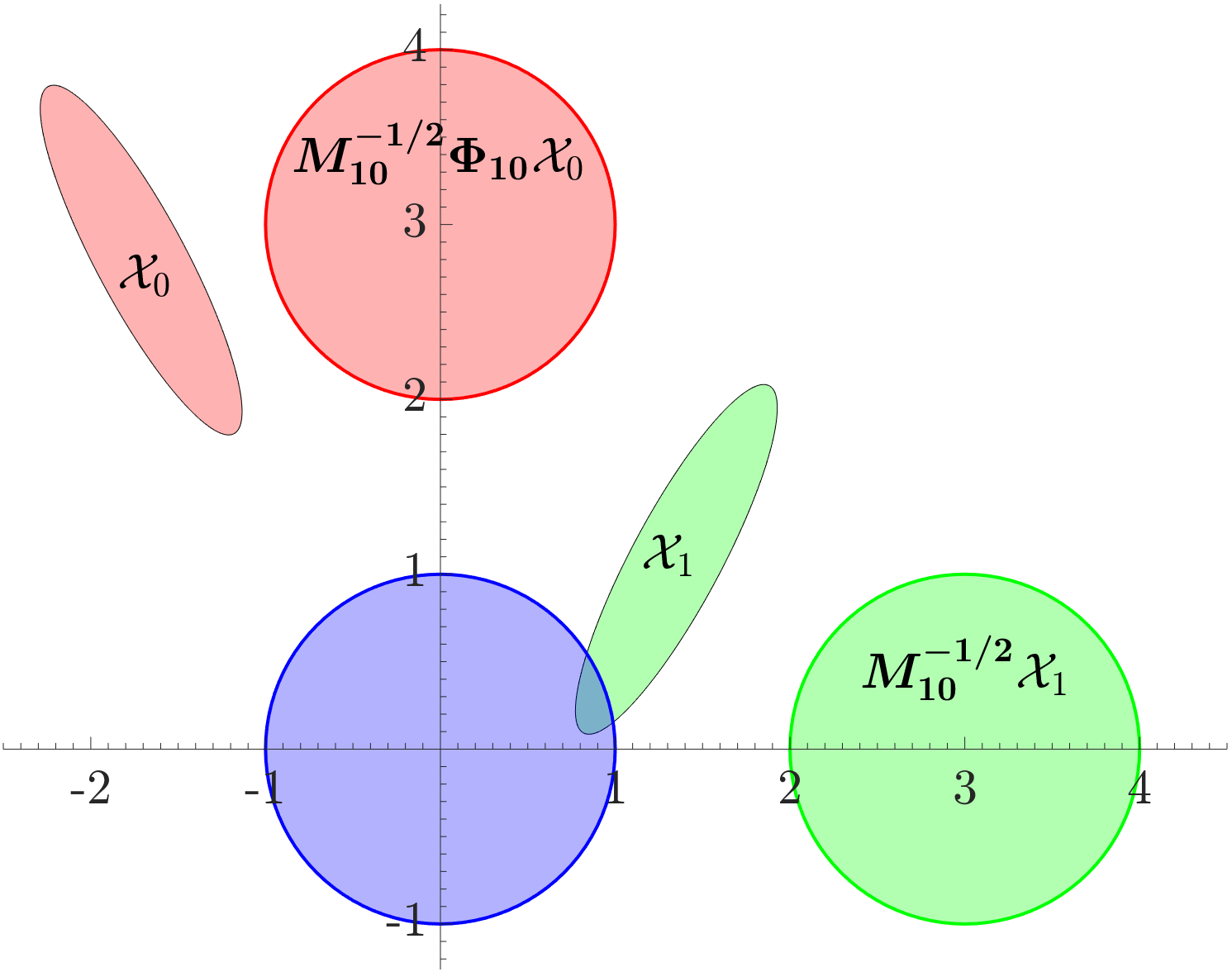}
    \caption{The sets $\mathcal{X}_0,\mathcal{X}_1$ and $\bm{M}_{10}^{-1/2}\bm{\Phi}_{10}\mathcal{X}_0,\bm{M}_{10}^{-1/2}\mathcal{X}_1$ in \textbf{Example 1}. The preconditioned supports coincide with the origin-centered unit circular disk (in \emph{blue}).}\label{fig:precond_visuals}
\end{figure}


Without the use of preconditioning, we determine $\gamma_{\rm{L}}$ from Theorem \ref{ThmgammaLinear} by considering the maximum and minimum separation between the sets $\bm{M}_{10}^{-1/2} \bm{\Phi}_{10} \mathcal{X}_0$ and $\bm{M}_{10}^{-1/2} \mathcal{X}_1$, which in our example, are two disjoint circular disks. We obtain $\widetilde{\alpha}_{\rm{L}} = 2 + 2\sqrt{3}$ and $\widetilde{\beta}_{\rm{L}} = -2 + 2\sqrt{3}$. From Theorem \ref{ThmgammaLinear}, we determine $\gamma_{{\rm{L}}} = \tanh^2(1) \approx 0.580$.

When the pushforwards $\left(\bm{M}_{10}^{-1/2} \bm{\Phi}_{10}\right)_{\sharp}\rho_0$ and $\left( \bm{M}_{10}^{-1/2}\right)_{\sharp}\rho_1$, i.e.,
$$\dfrac{\sqrt{\det(\bm{M}_{10})}}{\det\left(\bm{\Phi}_{10}\right)}\rho_0(\bm{\Phi}_{10}^{-1}\bm{M}_{10}^{1/2}(\cdot)),\, \sqrt{\det(\bm{M}_{10})}\rho_1(\bm{M}_{10}^{1/2}(\cdot)),$$ 
have identical, diagonal covariance matrices, applying the preconditioning procedure as in \cite[Sec. 5]{kuang2017preconditioning} amounts to translating the means of the supports $\bm{M}_{10}^{-1/2} \bm{\Phi}_{10} \mathcal{X}_0$ and $\bm{M}_{10}^{-1/2} \mathcal{X}_1$ to the origin. In our example, the preconditioned supports \eqref{PrecondSupportsExample} are both origin-centered unit disks (Fig. \ref{fig:precond_visuals}). Consequently, $\widetilde{\alpha}_{\rm{L}}^{{\rm{precond}}} = 2$, $\widetilde{\beta}_{\rm{L}}^{{\rm{precond}}} = 0$, and we get $\gamma_{\rm{L}}^{{\rm{precond}}} = \tanh^2(0.5) = 0.214$, which is an improvement on the original $\gamma_{\rm{L}}\approx 0.580$. 

\begin{remark}
One usage of Theorem \ref{ThmgammaLinear} is thus to demonstrate the effectiveness of proposed preconditioning procedures in reducing $\gamma_{\rm{L}}$. Additionally, the application of such preconditioning procedures can transform the supports to allow for improved calculation of $\gamma_{\rm{L}}^{{\rm{precond}}}$, as was the case in \textbf{Example 1}. How to optimally construct such a preconditioning procedure for a given SBP remains an open question that will be explored in our future work. 
\end{remark}


\section{Conclusions}\label{sec:conclusions}
This work advances systems-control-theoretic underpinnings at the intersection of Schr\"{o}dinger bridge and stochastic control problems by deriving a formula for the worst-case contraction rate for a linear SBP in terms of the problem data. The formula takes the form of squared hyperbolic tangent of a scaled range, which has nice geometric as well as optimal control-theoretic interpretations. These interpretations also suggest the possibility of preconditioning the endpoint supports for improved computation. We illustrate the same through an example, and conclude with an open question on optimal preconditioning for a given SBP.   


\bibliographystyle{IEEEtran}
\bibliography{References.bib}

\end{document}